\newtheorem{obs}{Observation}
\newtheorem{thm}{Theorem}
\newtheorem{prp}{Property}
\newtheorem{cor}{Corollary}
\newtheorem{rem}{Remark}
\newtheorem{exm}{Example}
\author[Tatjana Zec et. al]{Tatjana Zec
  \and Milana Grbi\'c 
}
\title[Several Roman domination graph invariants on  Kneser graphs]{Several Roman domination graph invariants on  Kneser graphs}
\affiliation{
   Faculty of Natural Science and Mathematics,  University of Banja Luka, Bosnia and Herzegovina}
\keywords{ Kneser graphs, Roman domination,   total Roman domination,  signed Roman domination}
\begin{document}
\publicationdata
{vol. 25:1}{2023}{14}{10.46298/dmtcs.10506}{2022-12-19; 2022-12-19; 2023-03-16}{2023-04-25}
\maketitle
\begin{abstract}
	This  paper considers  the following three Roman domination graph invariants on Kneser graphs: 
	Roman domination, total  Roman domination, and signed Roman domination. 
	For Kneser graph $K_{n,k}$, we present  exact values for   Roman domination number $\gamma_{R}(K_{n,k})$ and total Roman domination number $\gamma_{tR}(K_{n,k})$ proving that for  $n\geqslant k(k+1)$, $\gamma_{R}(K_{n,k}) =\gamma_{tR}(K_{n,k}) = 2(k+1)$. For signed Roman domination number $\gamma_{sR}(K_{n,k})$,  the new lower and upper bounds for $K_{n,2}$ are provided: we prove that for $n\geqslant 12$, 
	the lower bound is equal to 2, while the upper bound depends on the parity of $n$ and  is equal to 3 if $n$ is odd, and equal to $5$ if $n$ is even. For graphs of smaller dimensions,  exact values are found by applying   exact methods from literature. 
\end{abstract}

\section{Introduction}

Let $G = (V,E)$ be a simple connected graph, with a set of vertices $V$, a set of edges $E$ and its order $|V|$. For an arbitrary vertex $v\in V$, open neighborhood $N(v)$ is defined as  set $\{u\in  V \mid uv\in  E\}$, while closed neighborhood  $N[v]$  is  set $N[v] = N(v)\cup \{v\}$.

\it The domination set $S$ \rm of  graph $G$ is defined as the subset of  set $V$ such that 
	\begin{equation*}
		(\forall u\in V\setminus S)(\exists v\in S)\ \  uv\in E.
\end{equation*} 
The minimum cardinality $\gamma(G)$ of a domination set is called \it the domination number  \rm of  graph $G$.   

\it Roman domination function \rm (RDF)  on  graph $G$, formally introduced by  \cite{coc04}, is defined as  function $f:V\to\{0,1,2\}$ which satisfies the condition
\begin{equation}
	\label{eq:c1rd}
	(\forall v\in V_0)(\exists u \in V_2) \  \ uv \in E,
\end{equation}
where $V_i=\{v\in V \mid f(v) = i\},\, i = 0,1,2$.
The  \it{weight} \rm of  function  $f$ is  value  $f(V) = \sum_{v\in V}f(v)$. The minimum value of the weights of all RDFs on graph $G$, denoted with $\gamma_R(G)$, is called the \it{Roman domination number }\rm  (RDN).

The basic relation between domination and Roman domination numbers is given in the following property.
\begin{prp} \cite{coc04} \label{rbou} For any graph $G$, it holds $\gamma(G) \le \gamma_R(G) \le 2 \cdot \gamma(G)$.
\end{prp} 

\it Total Roman domination function \rm (TRDF), introduced by \cite{liu2013roman}, is defined as  function $f:V\to\{0,1,2\}$, i.e., by the partition $(V_0,V_1,V_2)$ of  set $V$, which satisfies  conditions \eqref{eq:c1rd} and
\begin{equation}\label{eq:c2trd}
	(\forall v \in V)\     \sum_{u\in N(v)}f(u) \geqslant 1.
\end{equation}

In literature,  condition \eqref{eq:c2trd} is also introduced with an equivalent property that the subgraph of  graph $G$ induced with  vertices with a positive label has no isolated vertices.

\it{The total Roman domination number }\rm  (TRDN) of  graph $G$,  denoted with  $\gamma_{tR}(G)$, is   the minimum  weight  $f(V)= \sum_{v\in V}f(v)$ of all TRDFs $f$ on  $G$.
As each TRDF satisfies condition \eqref{eq:c1rd}, it is also an RDF. Therefore, the following observation is straightforward.
\begin{obs} \label{obs1} \cite{ahangar2016total} For each graph $G$ without isolated vertices, it holds $\gamma_R(G)\leqslant\gamma_{tR}(G)$. 
\end{obs}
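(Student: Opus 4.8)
The plan is to show that every total Roman domination function is itself a Roman domination function, so that the minimization defining $\gamma_R(G)$ is carried out over a family of functions that contains all the functions competing in the minimization defining $\gamma_{tR}(G)$. First I would place the two definitions side by side: an RDF is any $f:V\to\{0,1,2\}$ satisfying the domination-type condition \eqref{eq:c1rd}, whereas a TRDF is an $f$ satisfying both \eqref{eq:c1rd} and the additional total condition \eqref{eq:c2trd}. The crucial point, already flagged in the surrounding text, is that the defining requirement of a TRDF strictly includes condition \eqref{eq:c1rd}; hence the class of TRDFs on $G$ is a subset of the class of RDFs on $G$.

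Next I would invoke the monotonicity of a minimum taken over nested sets. Since $\gamma_R(G)$ is the minimum weight $f(V)$ over all RDFs, $\gamma_{tR}(G)$ is the minimum weight over the smaller family of TRDFs, and the weight functional $f\mapsto f(V)$ is identical in both settings, the minimum over the larger family can only be smaller or equal. Concretely, if $g$ is an optimal TRDF realizing $\gamma_{tR}(G)=g(V)$, then $g$ is in particular an RDF, so that $\gamma_R(G)\leqslant g(V)=\gamma_{tR}(G)$, which is the asserted inequality.

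The only point requiring care is the existence of at least one TRDF, which is precisely where the hypothesis that $G$ has no isolated vertices enters: condition \eqref{eq:c2trd} forces every vertex to possess a neighbour carrying a positive label, a demand that an isolated vertex could never meet, so $\gamma_{tR}(G)$ would be undefined in that degenerate case. For a graph without isolated vertices one may simply exhibit the all-ones function as a valid TRDF, since then $V_0=\emptyset$ makes \eqref{eq:c1rd} vacuous while $\sum_{u\in N(v)}f(u)=\deg(v)\geqslant 1$ secures \eqref{eq:c2trd}, confirming that the minimum is taken over a nonempty family. I do not expect any genuine obstacle: the statement reduces to a single set-inclusion together with the monotonicity of the minimum, which is exactly why the text calls the observation straightforward.
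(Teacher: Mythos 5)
Your proof is correct and follows the same route as the paper: every TRDF satisfies condition \eqref{eq:c1rd}, hence is an RDF, so the minimum defining $\gamma_R(G)$ is taken over a superset of the functions defining $\gamma_{tR}(G)$. Your extra remark that the no-isolated-vertices hypothesis guarantees a TRDF exists (e.g.\ the all-ones function) is a sensible addition, but the argument is otherwise identical to the paper's one-line justification.
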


\it Signed Roman domination function \rm (SRDF) is  a function $f:V\to\{-1,1,2\}$ for which it holds
\begin{equation}\label{eq:c1srd}
	(\forall v \in V_{-1})(\exists u \in V_2) \  \ uv \in E, 
\end{equation}
where
$V_i=\{v\in V \mid f(v) = i\},\, i \in\{ -1,1,2\}$ 
and
\begin{equation}\label{eq:c2srd}
	(\forall v \in V)\    \sum_{u\in N[v]}f(u) \geqslant 1.
\end{equation}

For proving new results, the following equivalent of the last condition is introduced. For $v\in V$, let  $\alpha_v,\beta_v$ and $\gamma_v$ represent cardinalities $|N(v)\cap V_2|,|N(v)\cap V_1|$ and $|N(v)\cap V_{-1}|$, respectively. Then  condition \eqref{eq:c2srd} is equivalent to  condition  \eqref{eq:v}
\begin{equation}\label{eq:v}
	(\forall v\in V) \ \  2\alpha_v+\beta_v-\gamma_v +f(v) \geqslant 1.
\end{equation}

\it{The signed Roman domination number }\rm  (SRDN)  $\gamma_{sR}(G)$  of  graph $G$ is the   minimum weight of all SRDFs  on graph $G$. 

The concept of \it the Kneser graph $K_{n,k},n,k\in \mathbb N$ \rm is introduced by \cite{kneser55}.
The set of vertices of  graph $K_{n,k}$ is  set of all $k$-element subsets of  set $\{1,2,\hdots,n\}$ and two vertices are adjacent if corresponding sets are disjoint.  Its order is $\binom n k$ and this is a type of regular graph with degree of each vertex equal to $\binom {n-k}k$. If $n<4$,  graph $K_{n,2}$ is edgeless. $K_{n,1}$ is the complete graph, while $K_{2k,k}$ for $k>1$ is not connected. Therefore,  in the rest of the paper we suppose that $n> 2k$ and $k> 1$. An example of the Kneser graph  for $n=5$ and $k=2$ ($K_{5,2}$) is given in Figure \ref{fig:kneser_example}. It can be noticed that this graph is isomorphic to  the Petersen graph.
\begin{figure}[h]
	\centering
	\includegraphics[height=180pt]{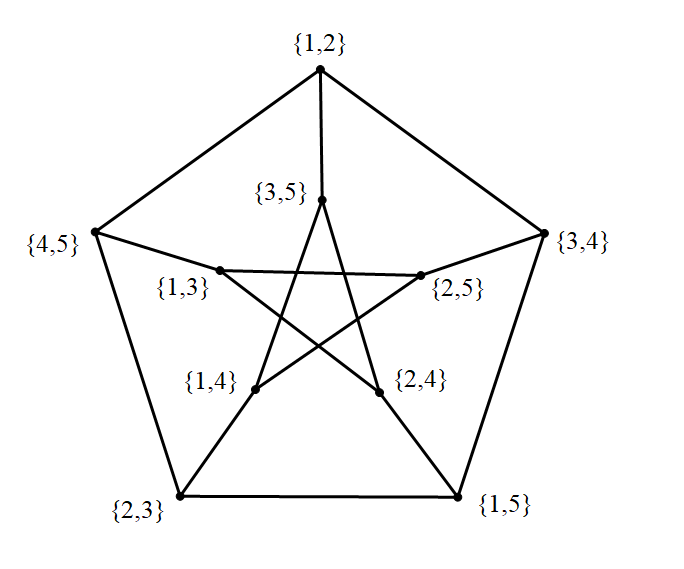}
	\label{fig:kneser_example}
	\caption{The Kneser graph $K_{5,2}$}
\end{figure}

\subsection{Previous work}

Graph $G$ is said to be a Roman graph if $\gamma_R(G) = 2\gamma(G)$. Several classes of Roman graphs were studied by \cite{coc04,henning2002characterization,yero2013roman,xueliang2009roman}. 
The exact result for the RDN of  generalized Petersen graphs was given by \cite{wang2011roman}.
Some more results regarding  RDN  can be found  in \cite{mobaraky2008bounds,liu2012upper,favaron2009roman,kartelj2021roman,li2021note}, for example.
A detailed review of  results on many variants of RDN is out of the scope of this paper and can be found in \cite{chellali2020varieties, chellali2021varieties}.

The relation between TRDN and (total) domination number as well as  with  RDN   was studied by \cite{martinez2020further,ahangar2016total}. Several bounds on SRDN in terms of  graph order, size, minimum and maximum vertex degree and (signed) domination number were explained by \cite{ahangar2014signed}. The authors also gave the exact value of a SRDN for some special graph classes: $\gamma_{sR}(K_3) = 2 \mathrm{\ and}\ \gamma_{sR}(K_n) = 1,n\neq 3,\ \gamma_{sR}(K_{1,n-1}) = 1,n=2l $ and $ \gamma_{sR}(K_{1,n-1}) = 2,n=2l+1,l\in\mathbb N,\ \gamma_{sR}(C_n) = \lceil\frac{2n}{3}\rceil,n\geqslant  3$ and $\gamma_{sR}(P_n) = \lfloor\frac{2n}{3}\rfloor,n\geqslant  1\ $.  The SRDN  was also considered for: digraphs  by \cite{sheikholeslami2015signed},  trees by \cite{henning2015signed}, the join of graphs by  \cite{behtoei2014signed} and planar graphs by \cite{zec2021signed}.

The value of the domination number for the Kneser graph is determined in Theorem \ref{dk}.  
\begin{thm}\cite{ostergaard2014bounds} \label{dk}  For $n \ge k \cdot (k+1)$, it holds $\gamma(K_{n,k}) = k+1$.
\end{thm}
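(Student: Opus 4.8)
The plan is to establish the two inequalities $\gamma(K_{n,k}) \le k+1$ and $\gamma(K_{n,k}) \ge k+1$ separately. For the upper bound I would exhibit an explicit dominating set of size $k+1$. Since $n \ge k(k+1) = (k+1)\cdot k$, the ground set $\{1,\dots,n\}$ admits $k+1$ pairwise disjoint $k$-subsets $A_0,A_1,\dots,A_k$, and I would take $S=\{A_0,\dots,A_k\}$. To see that $S$ dominates, let $B$ be any $k$-subset with $B\notin S$. Because the $A_i$ are pairwise disjoint, each element of $B$ lies in at most one $A_i$, so $B$ meets at most $|B|=k$ of the $k+1$ sets $A_0,\dots,A_k$; by pigeonhole there is an index $j$ with $B\cap A_j=\emptyset$, i.e. $B$ is adjacent to $A_j$. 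Hence every vertex lies in $S$ or is adjacent to a vertex of $S$, giving $\gamma(K_{n,k})\le k+1$.

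For the lower bound I would argue by contradiction: assume $S=\{B_1,\dots,B_m\}$ is a dominating set with $m\le k$, and exhibit a $k$-subset $C$ that is \emph{not} dominated, i.e. $C\notin S$ while $C\cap B_i\neq\emptyset$ for every $i$. Let $U=\bigcup_{i=1}^m B_i$, so $|U|\le mk\le k^2<k^2+k\le n$, whence at least one ground-set element lies outside $U$. Let $T$ be a minimum transversal (hitting set) of the family $\{B_1,\dots,B_m\}$ and put $t=|T|\le m\le k$. If $t<k$, I would pick an element $z\notin U$ and complete $T\cup\{z\}$ to a $k$-set $C$ by adding arbitrary further elements (possible since $t+1\le k\le n$). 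Then $C\supseteq T$ meets every $B_i$, while $z\in C\setminus U$ forces $C\neq B_j$ for all $j$; thus $C\notin S$, contradicting that $S$ dominates.

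The remaining and most delicate case is $t=k$, which I expect to be the main obstacle. Here $t=m=k$, and minimality forces $B_1,\dots,B_k$ to be pairwise disjoint: were two blocks to share an element, a single representative would hit both and yield a transversal of size below $m$. Then $T$ itself is a $k$-set meeting every $B_i$ in exactly one point, so taking $C=T$ gives $|C\cap B_j|=1<k$ for every $j$ (here the hypothesis $k>1$ is essential), whence $C\neq B_j$ and $C\notin S$ --- again a non-dominated vertex, a contradiction. Recognising that the transversal is itself a legitimate witness precisely because $k>1$, and thereby ruling out this pairwise-disjoint configuration, is the crux of the argument. Once it is handled, every $S$ with $m\le k$ fails to dominate, so $\gamma(K_{n,k})\ge k+1$; combined with the upper bound this yields $\gamma(K_{n,k})=k+1$.
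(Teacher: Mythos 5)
Your proof is correct, but note that the paper itself contains no proof of Theorem~\ref{dk}: the statement is imported directly from \cite{ostergaard2014bounds}, so there is no internal argument to compare yours against. What can be said is that your argument reconstructs, in a self-contained way, exactly the machinery the paper deploys elsewhere. Your upper-bound construction (the $k+1$ pairwise disjoint $k$-sets, with the pigeonhole observation that a $k$-set can meet at most $k$ of them) is precisely the dominating family that the paper borrows from the same reference in Step~2 of Theorem~\ref{trkk}, where it serves as $V_2$. Your lower bound --- take a minimum transversal $T$ of the blocks, adjoin a fresh element outside their union when $|T|<k$, and in the extremal case $|T|=m=k$ observe that minimality forces the blocks to be pairwise disjoint so that $T$ itself is an undominated vertex --- parallels the case analysis in Step~1 of Theorem~\ref{trkk}: Cases~2 and~3 there build exactly such transversal-based witnesses $\{s_1,\ldots,s_{k-1},s\}$, with the subcase split according to whether the vertices of $V_2$ are pairwise adjacent (i.e.\ pairwise disjoint as sets). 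Two details you handled well deserve emphasis: the hypothesis $k>1$ is genuinely necessary (for $k=1$ one has $K_{n,1}\cong K_n$ and $\gamma(K_n)=1\neq 2$, which is why the paper imposes $k>1$ as a standing assumption), and the inequality $n\ge k(k+1)>k^2$ is what guarantees both the existence of the $k+1$ disjoint blocks for the upper bound and the fresh element $z\notin U$ for the lower bound.
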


Domination problems are quite an attractive research domain which 
has captivated researchers from various fields over the past few decades, including mathematicians and computer scientists. It is known that, for example, determining the Roman domination number in case of general graphs is NP--hard~\cite{coc04}. That implies that a successful application of provenly strong exact computational paradigms is not expected for arbitrary large graphs.  
Thus, general widely--applied exact methods, such as the branch-and-bound framework~\cite{lawler1966branch},  are usually restricted to a successful application onto small to middle-sized graphs. However, these techniques still serve here in several ways ($i$) to determine Roman domination-type numbers on small-sized graphs, and ($ii$) to get an insight into these numbers in case of some graph classes w.r.t.\  graph parameters. Please note that from the theoretical point of view, these techniques do not provide any proof on  established  Roman domination numbers. In this work, some exact methods based on Integer linear programming (ILP) techniques~\cite{graver1975foundations} are used for solving the corresponding problems  for some Kneser graphs of small dimensions.  More precisely, the model given by \cite{burger2013binary}  is used to obtain  the results presented in Remark  \ref{rkk2r} and Remark  \ref{rkk3r},   and the model exposed by \cite{filipovic2022solving} is used to obtain the  results presented in Remark  \ref{srkk2r}. The formulations of these ILP  models are given in Appendix~\ref{app:A}.

\section{New results for Kneser graphs}

\subsection{(Total) Roman domination for Kneser graphs}
In this section we present  exact values for (total) Roman domination numbers for Kneser graphs.

\begin{thm} \label{trkk} For $n \ge k \cdot (k+1), k>1$, it holds $\gamma_{tR}(K_{n,k}) = 2(k+1)$.
\end{thm}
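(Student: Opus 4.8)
The plan is to establish the two inequalities $\gamma_{tR}(K_{n,k})\le 2(k+1)$ and $\gamma_{tR}(K_{n,k})\ge 2(k+1)$ separately. For the upper bound I would exhibit an explicit TRDF of weight $2(k+1)$. Since $n\ge k(k+1)$, the ground set $\{1,\dots,n\}$ contains $k+1$ pairwise disjoint $k$-element subsets $A_0,A_1,\dots,A_k$. I define $f$ by $f(A_i)=2$ for $i=0,\dots,k$ and $f\equiv 0$ otherwise, so that $f(V)=2(k+1)$. Any $k$-set $S$ meets at most $k$ of the $k+1$ pairwise disjoint sets $A_i$, hence is disjoint from — and so adjacent to — some $A_i$; this verifies \eqref{eq:c1rd} and the neighbour-sum part of \eqref{eq:c2trd} for every $S\in V_0$. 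Moreover $A_0,\dots,A_k$ are pairwise disjoint, hence pairwise adjacent in $K_{n,k}$, so the subgraph induced on $V_2$ is a clique on $k+1\ge 2$ vertices and has no isolated vertex. Thus $f$ is a TRDF and $\gamma_{tR}(K_{n,k})\le 2(k+1)$.

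For the lower bound, by Observation \ref{obs1} it suffices to prove $\gamma_R(K_{n,k})\ge 2(k+1)$. Let $f=(V_0,V_1,V_2)$ be any RDF and write $n_i=|V_i|$, so its weight equals $(n_1+n_2)+n_2$. Condition \eqref{eq:c1rd} forces $V_1\cup V_2$ to be a dominating set, so Theorem \ref{dk} gives $n_1+n_2\ge k+1$. If $n_2\ge k+1$ the weight is at least $2n_2\ge 2(k+1)$; otherwise I argue by contradiction, assuming the weight is at most $2k+1$, which together with $n_1+n_2\ge k+1$ yields $n_2\le k$.

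The key object is the set $\mathcal T$ of all $k$-sets meeting every member of $V_2$, the \emph{transversals} of $V_2$. A transversal is disjoint from no member of $V_2$, hence has no neighbour in $V_2$, so by \eqref{eq:c1rd} it cannot lie in $V_0$; therefore $\mathcal T\subseteq V_1\cup V_2$ and $|\mathcal T|\le n_1+n_2\le 2k+1-n_2$. Writing $m=n_2$, Hall's condition holds trivially for the family $V_2$ (any union of $j\le m\le k$ of its $k$-sets has at least $k\ge j$ elements), so $V_2$ has a system of distinct representatives $R$ of size $m$. For $1\le m\le k-1$, each of the $\binom{n-m}{k-m}\ge n-k+1$ many $k$-sets containing $R$ is a transversal, and $n\ge k(k+1)$ gives $n-k+1\ge 2k+1$; this already contradicts $|\mathcal T|\le 2k+1-m$.

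The delicate boundary case $m=k$ is the step I expect to be the main obstacle: here $R$ is itself a $k$-set with no free slots, so the crude ``extend $R$'' count collapses to $1$ and must be replaced by a finer argument. I would split on whether the members of $V_2$ are pairwise disjoint. If they are, then a transversal uses exactly one element from each of the $k$ disjoint sets, giving $|\mathcal T|=k^k\ge k+2$; if two members share an element $x$, then every $k$-set containing $x$ and meeting the remaining $k-2$ sets is a transversal, and a similar SDR-and-extension count produces at least $n-k+1\ge 2k+1$ of them. In either situation $|\mathcal T|\ge k+2>k+1=2k+1-m$, contradicting $|\mathcal T|\le 2k+1-m$. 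This rules out weight $2k+1$, so $\gamma_R(K_{n,k})\ge 2(k+1)$, and combining with Observation \ref{obs1} and the upper bound yields $\gamma_{tR}(K_{n,k})=2(k+1)$.
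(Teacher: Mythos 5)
Your proposal is correct in substance and shares the paper's combinatorial core: your upper bound (the $k+1$ pairwise disjoint $k$-sets labelled $2$, all else $0$) is exactly the paper's construction, and your lower bound turns on the same key object — $k$-sets meeting every member of $V_2$, which condition \eqref{eq:c1rd} forbids from lying in $V_0$ — with the same decisive split at $|V_2|=k$ into pairwise disjoint members (giving $k^k$ transversals) versus two members sharing an element (extend a $(k-1)$-element hitting set). The packaging, however, is genuinely different. The paper proves the lower bound directly for TRDFs by cases $|V_2|=0$, $1\le|V_2|\le k-1$, $|V_2|=k$, $|V_2|\ge k+1$, in each case forcing enough vertices into $V_1$, and only afterwards notes (Corollary \ref{rkk}) that nothing beyond \eqref{eq:c1rd} was used; you instead prove the bound for RDFs and lift it through Observation \ref{obs1}, which handles Theorem \ref{trkk} and Corollary \ref{rkk} in one pass. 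You also invoke Theorem \ref{dk} and Hall's theorem, neither of which the paper needs: the domination bound $n_1+n_2\ge k+1$ is in fact dispensable (your hypothesis $n_1+2n_2\le 2k+1$ already yields $n_2\le k$), and Hall's theorem is heavier machinery than the paper's direct choice of one element per member of $V_2$. What your framing buys is a single uniform contradiction $|\mathcal{T}|\le n_1+n_2\le 2k+1-n_2$, which is tidier than the paper's per-case bookkeeping; in particular, since $\mathcal{T}\subseteq V_1\cup V_2$, you never need the paper's extra care (its set $X$ of unused elements) to ensure the forced vertices avoid $V_2$.

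One small hole you should patch: your explicit case analysis covers $1\le m\le k-1$ and $m=k$ but silently skips $m=n_2=0$, which is allowed under your contradiction hypothesis. There the generic estimate $\binom{n-m}{k-m}\ge n-k+1\ge 2k+1$ is not strict enough when $k=2$ (it gives $|\mathcal{T}|\ge 2k+1$, matching rather than exceeding the bound $2k+1-m=2k+1$), so the stated chain of inequalities does not close the case as written. The fix is trivial — if $V_2=\emptyset$, condition \eqref{eq:c1rd} forces $V_0=\emptyset$, so the weight is $\binom{n}{k}\ge 2(k+1)$, exactly as in the paper's Case 1 — but the case must be stated.
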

\begin{proof}
	\textbf{\underline{Step 1}:\ { $\gamma_{tR} \geqslant 2(k+1)$.}}\\
	First, let us  show that for an arbitrary TRDF $\overline{f}$  it holds 
	\begin{equation} \label{eq:1}
		\overline{f}(V)\geqslant 2(k+1).
	\end{equation}
	Suppose that $\overline {f}$ is defined by  partition $(V_0,V_1,V_2)$.
	We consider all possible values for     $|V_2|$.
	
	{\bf Case 1:} $|V_2| = 0$.\\
	Since $\overline {f}$ is TRDF and $V_2$ is empty, then  $V_0$ is empty as well, so $\overline{f}(V)=|V_1|$. Therefore 
	\begin{align*}|V_1| & = |V| =  \binom{n}{k} = \frac{n(n-1)\hdots(n-k+1)}{k!}\\ 
		&\geqslant   \frac{(k^2+k)(k^2+k-1)\hdots(k^2+1)}{k!}>\frac{k^{2k}}{k!}.
	\end{align*}
	For $k = 2$ we have $\frac{k^{2k}}{k!} = 8>6=2(k+1)$. If $k\geqslant 3$, we get 
	$\frac{k^{2k}}{k!} =\frac{k^k}{k!}k^k\geqslant k^k\geqslant 2(k+1)$.
	So, in this case  $\overline{f}(V) \geqslant 2(k+1)$ holds.
	
	{\bf Case 2:}  $1\leqslant |V_2|\leqslant k-1.$ \\
	At most $|V_2|\cdot k$ different numbers (from  set $\{1,\ldots n\}$) are used to form vertices from  set $V_2$. 
	Then  at least $n-|V_2|\cdot k\geqslant k^2+k-(k-1)\cdot k=2k$ different numbers do not appear in any vertex from  set $V_2$. Let $X$ denote  set of these numbers.
	Let us identify $2k$ vertices not belonging to  set $V_2$ and  not adjacent to any vertex from set $V_2$.
	Let  $\{s_1,s_2,\ldots,s_{k-1}\}$ be  set of some   $k-1$ different numbers  which are chosen such that every vertex from $ V_2$   contains at least one of these numbers $s_i,\ i=1,\ldots,k-1$. An illustration of this procedure is shown in Example \ref{example1}.
	Now, let $Y= \{\{s_1,s_2,\ldots,s_{k-1},s\}: s\in X\}$. It holds that $Y\cap V_2 = \emptyset$. In addition, any vertex from $Y$  is not adjacent to any vertex from  set  $V_2$. Since  $\overline{f}$ is a TRDF, we conclude that  $Y\subseteq V_1$. 
	Therefore, $|V_1|\geqslant |Y|= 2k$, so using assumption $|V_2|\geqslant 1$, we get
	\begin{equation*}
		\overline{f}(V)=2|V_2|+|V_1|\geqslant 2|V_2|+2k\geqslant 2|V_2|+2k-2|V_2|+2=2(k+1).
	\end{equation*}
	
	{\bf Case 3:}   $ |V_2| = k.$\\
	Similarly, as in Case 2, at most  $|V_2|\cdot k = k^2$ different numbers are used to form vertices from  set $V_2$. Let us consider the numbers which do not appear in any vertex from set $V_2$ and denote set of such numbers with $X$. 
	Therefore, it holds  $|X|\geqslant n-k^2\geqslant k^2+k-k^2=k$. We here analyze two subcases.
	
	{\em Subcase 3.1:} 
	All vertices from set  $V_2$ are  adjacent to each other.\\
	Then,  by choosing one number per each vertex, we can identify total   $k^k$  vertices, such that neither of them is  adjacent to any vertex from $V_2$. Also notice that  none of these vertices belong to $V_2$.
	Therefore we conclude that all these vertices belong to  set $V_1$.
	Thus,  we get  $|V_1|\geqslant k^k\geqslant 2k$ and 
	\begin{equation*}
		\overline{f}(V)=2|V_2|+|V_1| \geqslant 2|V_2|+2k\geqslant 2|V_2|+2k-2|V_2|+2\geqslant 2k+2=2(k+1).
	\end{equation*}
	
	{\em Subcase 3.2:}  There exists at least one pair of non-adjacent vertices in  set $V_2$. \\
	Let $u,v\in V_2$ be vertices such that $u\cap v\neq \emptyset$ and $s_1\in u\cap v$. If we  choose numbers $s_2,s_3\hdots,s_{k-1}$ such that each of the remaining $(k-2)$ vertices from  set $V_2$ contains at least one of these numbers, then    set  $\{s_1,s_2,...,s_{k-1}\}$ has the same properties like the appropriate one from  Case 2.   Similarly as above, we conclude   that  each vertex of  form $\{s_1,s_2,\ldots,s_{k-1},s\}$, where $s\in X$ , belongs to  set $V_1$. Therefore, $|V_1|\geqslant k\geqslant 2$ and again 
	\begin{equation*}
		\overline{f}(V)=2|V_2|+|V_1|\geqslant 2k+2=2(k+1),
	\end{equation*} which concludes Case 3.
	
	{\bf Case 4:} $|V_2| \geqslant k+1$. \\
	In this case, it trivially holds that $\overline{f}(V)\geqslant 2(k+1)$.
	
	\textbf{\underline{Step 2}: { $\gamma_{tR} \leqslant2(k+1)$}.}\\
	Following the idea from \cite{ostergaard2014bounds}, we construct  function $f$ as follows. Let  $V_2$ be a collection of $k+1$ disjoint $k$-sets defined as
	\begin{equation*}
		V_2 = \{\{1,2,\ldots,k\},\{k+1,k+2,\ldots,2k\},\ldots,\{k^2+1,k^2+2,\ldots,k^2+k\}\}.
	\end{equation*}
	Let $V_1 = \emptyset$ and $V_0=V\backslash V_2$. The weight of the function $f$  is 
	$$f(V) = 2|V_2| = 2(k+1).$$
	Let us show that $f$ is a TRDF.
	Each vertex, i.e., $k$-element set from $V_0$ is non-disjoint with  at  most $k$ vertices from  set $V_2$, so it is disjoint with at least one vertex from $V_2$. Therefore,  condition  $\eqref{eq:c1rd}$ is satisfied.

	Given the previous consideration, for each vertex $v\in V_0$, $\sum_{u\in N(v)}f(u) \geqslant 1$ also holds. 
	From the construction of set $V_2$,  each vertex $v\in V_2$ is adjacent to   all other vertices from $V_2$. So, for all $v\in V_2$, it holds that $\sum_{u\in N(v)}f(u) = 2k\geqslant 1$. Condition \eqref{eq:c2trd} is thus satisfied, which concludes the proof.
	
\end{proof}

\begin{exm} \label{example1} By this example we illustrate the procedure shown in Case 2 of the previous theorem.
	
	Let $n=20,\ k=4$ and let $f$ be a TRDF for $K_{20,4}$, such that  
	\begin{equation*}
V_2=\{\{1,2,3,4\},\{5,6,7,8\},\{5,6,9,10\}\}.
	\end{equation*}
	
	Notice that $n = k^2+k$ and $|V_2| = k-1$.
	
	Let us take three different numbers: $s_1,s_2$, and $s_3$ such that  for each $v\in V_2$ it holds $v\cap\{s_1,s_2,s_3\}\neq\emptyset$. For example, let $s_1 = 1,s_2 = 5$, and $s_3 = 8$. 
	
	The vertices from  set $V_2$ contain $10$ different elements, which is less than $|V_2|\cdot k=12$. 
	
	Let   $X =\{13,14,\hdots,20\}$ and   $Y = \{\{s_1,s_2,s_3,s\}:s\in X\}\subset V$. It is obvious that for every two vertices $u\in Y$ and  $v\in V_2$, it holds $u\cap v\neq\emptyset$, i.e., in graph $K_{20,4}$ no vertex from set $Y$  has a neighbor from  set $V_2$.   Also, for each $u\in Y$, it holds that $u\notin V_2$. Therefore, since $f$ is TRDF, every such vertex must belong to  set $V_1$, so $|V_1|\geqslant |Y|= 8= 2k$.

\end{exm}

The result for the Roman domination number  follows straightforwardly. 

\begin{cor} \label{rkk} For $n \geqslant k \cdot (k+1), k>1$, it holds $\gamma_R(K_{n,k}) = 2(k+1)$.
\end{cor}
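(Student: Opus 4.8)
The plan is to establish the two inequalities $\gamma_R(K_{n,k}) \leqslant 2(k+1)$ and $\gamma_R(K_{n,k}) \geqslant 2(k+1)$ separately, reusing the two steps of the proof of Theorem \ref{trkk} rather than repeating any computation. Since $K_{n,k}$ has no isolated vertices for $n > 2k$ (it is $\binom{n-k}{k}$-regular with positive degree), Observation \ref{obs1} applies, and the upper bound will come for free; the real content lies in transferring the lower bound.

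For the upper bound I would simply invoke Observation \ref{obs1}: every TRDF is in particular an RDF, so $\gamma_R(K_{n,k}) \leqslant \gamma_{tR}(K_{n,k})$, and the right-hand side equals $2(k+1)$ by Theorem \ref{trkk}. Equivalently, one can observe directly that the explicit partition $(V_0,\emptyset,V_2)$ constructed in Step 2 of that proof is already an RDF of weight $2(k+1)$, which yields the same bound. A third independent route is Property \ref{rbou} combined with Theorem \ref{dk}, giving $\gamma_R(K_{n,k}) \leqslant 2\gamma(K_{n,k}) = 2(k+1)$.

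For the lower bound, the decisive remark is that Step 1 of the proof of Theorem \ref{trkk} never uses the total Roman condition \eqref{eq:c2trd}: in each of the cases on $|V_2|$, the estimate $\overline f(V) \geqslant 2(k+1)$ is obtained solely from condition \eqref{eq:c1rd}, via the mechanism that any vertex lying outside $V_2$ and having no neighbor in $V_2$ must be forced into $V_1$. Since \eqref{eq:c1rd} is satisfied by \emph{every} RDF, the identical case analysis applies verbatim to an arbitrary RDF $\overline f$ and gives $\overline f(V) \geqslant 2(k+1)$, hence $\gamma_R(K_{n,k}) \geqslant 2(k+1)$. I expect this verification to be the only delicate point: one must confirm that the Step 1 argument is genuinely RDF-only and does not secretly rely on the induced-subgraph-has-no-isolated-vertices property. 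It is worth noting why this refined argument is unavoidable here — the generic bound $\gamma_R(K_{n,k}) \geqslant \gamma(K_{n,k}) = k+1$ supplied by Property \ref{rbou} and Theorem \ref{dk} is off by a factor of two and therefore too weak to close the gap on its own. Combining the two bounds then yields the claimed equality.
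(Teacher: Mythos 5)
Your proposal is correct and follows essentially the same route as the paper: the lower bound by observing that Step 1 of Theorem \ref{trkk} relies only on condition \eqref{eq:c1rd} and hence applies verbatim to any RDF, and the upper bound via Observation \ref{obs1} together with Theorem \ref{trkk}. Even your alternative upper-bound route through Property \ref{rbou} and Theorem \ref{dk} appears in the paper as a follow-up observation, so there is nothing to add.
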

\begin{proof}

	\textbf{\underline{Step 1}:\ { $\gamma_{R}(K_{n,k}) \geqslant 2(k+1)$.}}

	One can notice that the complete proof for the lower bound of TRDN in Step 1 of  Theorem  \ref{trkk} is based only on using property \eqref{eq:c1rd}. 
	Since each RDF must also  satisfy that property, the same lower bound holds for RDN.

	\textbf{\underline{Step 2}: { $\gamma_{R}(K_{n,k}) \leqslant2(k+1)$}.}\\
	This inequality is a  straightforward consequence of  Theorem~\ref{trkk} and Observation~\ref{obs1}.

\end{proof}
\begin{obs}
	The inequality in Step 2 of Corollary~\ref{rkk}  also follows from Theorem \ref{rbou} and Property \ref{dk}.
\end{obs}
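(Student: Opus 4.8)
The plan is to obtain the inequality of Step 2 of Corollary \ref{rkk}, namely $\gamma_R(K_{n,k}) \leqslant 2(k+1)$, directly by chaining two results already available in the excerpt, thereby bypassing the explicit construction of a TRDF used in Theorem \ref{trkk}. First I would recall the right-hand inequality of Property \ref{rbou}, which asserts that $\gamma_R(G) \leqslant 2\gamma(G)$ for every graph $G$; specializing to $G = K_{n,k}$ gives $\gamma_R(K_{n,k}) \leqslant 2\gamma(K_{n,k})$.

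Next I would invoke Theorem \ref{dk}, which evaluates the ordinary domination number exactly as $\gamma(K_{n,k}) = k+1$ under the hypothesis $n \geqslant k(k+1)$. Substituting this value into the previous inequality yields $\gamma_R(K_{n,k}) \leqslant 2(k+1)$, which is exactly the inequality established in Step 2 of Corollary \ref{rkk}. The only point requiring a moment's care is checking that the hypotheses align: the present observation is stated for $n \geqslant k(k+1)$, which is precisely the regime in which Theorem \ref{dk} guarantees the exact domination value, so no additional case analysis or restriction is needed.

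There is essentially no obstacle here beyond this bookkeeping; the argument is a two-step substitution. It is worth emphasizing, though, what makes it work: Property \ref{rbou} supplies the generic factor-of-two upper bound relating $\gamma_R$ to $\gamma$, and because the Kneser graph attains its domination number exactly at $k+1$ throughout this range, the generic bound collapses to the sharp value $2(k+1)$. This provides an independent derivation of the upper bound, complementing the route through Theorem \ref{trkk} and Observation \ref{obs1} taken in the corollary itself.
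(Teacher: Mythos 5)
Your derivation is correct and is exactly the argument the paper's observation intends: apply the upper bound $\gamma_R(K_{n,k}) \leqslant 2\gamma(K_{n,k})$ from Property~\ref{rbou} and substitute $\gamma(K_{n,k}) = k+1$ from Theorem~\ref{dk}, whose hypothesis $n \geqslant k(k+1)$ matches the range of Corollary~\ref{rkk}. Nothing is missing; your hypothesis check is the only point of care and you handled it.
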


The following two remarks (Remark~\ref{rkk2r} and Remark~\ref{rkk3r}) contain results for  Kneser graphs $K_{n,2}$ and $K_{n,3}$, which are not  covered by Corollary \ref{rkk}.   As previously mentioned, we used the ILP model  from  \cite{burger2013binary} to find RDN  of these graphs.  The RDFs which correspond to these  solutions and  ILP model details are presented  in  Appendix \ref{app:A}.

\begin{rem} \label{rkk2r} It holds 
	$$\gamma_R(K_{5,2}) =  
	6.$$
\end{rem}

\begin{rem} \label{rkk3r} It holds
	$$\gamma_R(K_{n,3}) = \begin{cases} 
		14, & n=7,8,9, \\
		12, & n=10, \\
		10, & n=11.
	\end{cases}$$
	
\end{rem}

\subsection{Signed Roman domination for Kneser graphs}

In this section we present new lower and upper bounds for the signed Roman domination number for Kneser graphs $K_{n,2}$.

\begin{thm}\label{thm:signed} 
	For $n\geqslant 12 $ it holds:
	\begin{itemize}
		\item  $2\leqslant \gamma_{sR}(K_{n,2}) \leqslant 3, \  n \mathrm{\ is \ odd}$,
		\item $ 2\leqslant \gamma_{sR}(K_{n,2}) \leqslant 5, \ n \mathrm{\ is \ even}$.
	\end{itemize}
\end{thm}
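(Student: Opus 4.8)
My plan is to treat the two bounds separately: the lower bound by a global double-counting argument that covers both parities at once, and the upper bounds by an explicit construction whose weight is governed by the parity of $n$.

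For the lower bound $\gamma_{sR}(K_{n,2})\geqslant 2$, I would sum condition \eqref{eq:c2srd} over all vertices. Writing $d=\binom{n-2}{2}$ for the common degree and $f(V)$ for the weight, the identity
\begin{equation*}
\sum_{v\in V}\sum_{u\in N[v]}f(u)=(1+d)\,f(V)
\end{equation*}
holds because $u\in N[v]\iff v\in N[u]$, so each vertex $u$ is counted exactly $|N[u]|=1+d$ times. Since the left-hand side is at least $|V|=\binom{n}{2}$, this gives $f(V)\geqslant\binom{n}{2}/\bigl(1+\binom{n-2}{2}\bigr)$. As $\binom{n}{2}-\binom{n-2}{2}=2n-3>1$ for $n\geqslant 2$, the fraction is strictly larger than $1$, and because $f(V)$ is an integer we conclude $f(V)\geqslant 2$. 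This single computation yields the lower bound for all $n\geqslant 12$.

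For the upper bounds I would exhibit explicit SRDFs. Two observations fix the shape of any low-weight solution: if $V_{-1}=\emptyset$ then $f(V)\geqslant\binom{n}{2}$, far above $5$, so $V_{-1}$ must be large; and by \eqref{eq:c1srd} a large $V_{-1}$ forces $V_2\neq\emptyset$. Hence the construction should place most vertices at $-1$, a controlled set at $+2$ (to satisfy \eqref{eq:c1srd} and to inject positive weight into neighbourhoods), and a few $+1$ corrections. It is convenient to rewrite \eqref{eq:v}, using $\alpha_v+\beta_v+\gamma_v=d$, as
\begin{equation*}
(\forall v\in V)\qquad 3\alpha_v+2\beta_v\geqslant d+1-f(v),
\end{equation*}
which displays exactly how many $+2$- and $+1$-neighbours each vertex must see. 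The binding case is a vertex in $V_{-1}$, for which one needs $3\alpha_v+2\beta_v\geqslant d+2$; if all its neighbours outside $V_2$ are also $-1$, this reads $\alpha_v\geqslant (d+2)/3$. Concretely, I would choose $V_2$ to be a set that is simultaneously dominating (so \eqref{eq:c1srd} holds for every intended $-1$ vertex, using the size-$3$ dominating sets guaranteed by Theorem~\ref{dk}) and as \emph{regular} as possible, so that every vertex of $K_{n,2}$ sees the same number $c\approx d/3$ of its neighbours in $V_2$, with $c\geqslant (d+2)/3$. Then all remaining vertices can be set to $-1$, every instance of the displayed inequality is met, and the weight equals $3|V_2|-\binom{n}{2}$ (plus a bounded correction from any $+1$'s needed to repair the finitely many tight vertices when perfect regularity is unavailable). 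This target is tuned to $3$ when $n$ is odd and to $5$ when $n$ is even.

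The main obstacle is precisely this construction of $V_2$ together with the accompanying $\pm1$ assignment: one must control $\alpha_v,\beta_v,\gamma_v$ uniformly over \emph{all} vertices, including those in $V_2$ (where $f(v)=2$ relaxes the requirement to $3\alpha_v+2\beta_v\geqslant d-1$), so that \eqref{eq:v} never fails, while pinning the global weight $3|V_2|-\binom{n}{2}$ to the smallest admissible value. This is essentially a near-balanced covering-design problem on $K_{n,2}$, and it is in the even case — where the relevant neighbourhood counts and binomial divisibilities do not align as favourably as for odd $n$ — that one is forced to spend two extra units of weight, giving the bound $5$ rather than $3$.
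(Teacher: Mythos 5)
Your lower bound is correct and is exactly the paper's Step~1: summing \eqref{eq:c2srd} over the $\binom{n}{2}$ vertices of the $\binom{n-2}{2}$-regular graph $K_{n,2}$ gives $\bigl(\binom{n-2}{2}+1\bigr)f(V)\geqslant\binom{n}{2}$, hence $f(V)>1$ and, by integrality, $f(V)\geqslant 2$. The genuine gap is in the upper bounds, which is where essentially all of the work lies: to prove $\gamma_{sR}(K_{n,2})\leqslant 3$ (odd $n$) and $\leqslant 5$ (even $n$) one must actually exhibit an SRDF of that weight, and you never do. You describe properties a labelling ought to have and then concede that producing it (your ``near-balanced covering-design problem'') is the main obstacle. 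That missing construction \emph{is} the proof of Step~2; in the paper it occupies several pages of explicit definitions of $(V_{-1},V_1,V_2)$ and vertex-by-vertex verification of \eqref{eq:c1srd} and \eqref{eq:v}, split into the odd case and two even subcases ($n\equiv 0$ and $n\equiv 2 \pmod 4$).

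Moreover, the plan you sketch points in a doubtful direction, so the gap is not merely one of ``details to be filled in''. First, with $V_1$ essentially empty and everything outside $V_2$ labelled $-1$, the weight is $3|V_2|-\binom{n}{2}$, whose residue modulo $3$ is determined by $n\bmod 3$, not by the parity of $n$; nothing in your setup derives the claimed dichotomy ``$3$ for odd $n$, $5$ for even $n$'', and your closing appeal to unfavourable ``binomial divisibilities'' in the even case is an assertion, not an argument. Second, your requirement $\alpha_v\geqslant(d+2)/3$ for every $-1$ vertex, with $|V_2|$ only marginally above $\binom{n}{2}/3$, forces $\alpha_v$ to sit essentially at its average value for every vertex simultaneously, an extremely rigid near-regularity condition whose realizability you do not establish. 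Third, the construction that is known to work has the opposite shape from yours: for odd $n$ the paper takes $V_2$ to be only $\frac{n-3}{2}$ pairs (a cycle inside a part $A_n$ of size $\frac{n-3}{2}$), takes $V_{-1}$ to be the $\frac{n^2-9}{4}$ pairs crossing between $A_n$ and $B_n=\{1,\dots,n\}\setminus A_n$, and puts the remaining roughly $n^2/4$ vertices in $V_1$. There the weight $2|V_2|+|V_1|-|V_{-1}|=3$ arises from near-cancellation of $|V_1|$ against $|V_{-1}|$, and condition \eqref{eq:v} at a $-1$ vertex holds because $\beta_v$ nearly cancels $\gamma_v$ while the linear-sized term $2\alpha_v\approx n$ covers the remaining deficit; $V_2$ is tiny (linear in $n$), not of size $\approx\binom{n}{2}/3$, and the $+1$ vertices are a constant fraction of the graph, not ``a few corrections''. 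The even case uses a similar bipartition with matchings inside the parts in place of the cycle. None of this can be recovered from your outline, so the theorem's upper bounds remain unproved in your proposal.
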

\begin{proof}
	\textbf{\underline{Step 1}:}\ {$\gamma_{sR}(K_{n,2})\geqslant 2.$}\\
	Let $\overline{f} $ be an arbitrary SRDF, defined as $(V_{-1},V_1,V_2)$. Then for every vertex $v\in V$, inequality in condition  \eqref{eq:c2srd} holds. By summing up all the inequalities from condition \eqref{eq:c2srd}, we get
	
	\begin{equation} \label{eqn:sum}
		\sum_{v\in V} \sum_{u\in N[v]}\overline{f}(u) \geqslant \binom{n}{2}
	\end{equation}
	
	Since each vertex $v\in V$ has the degree  $\binom{n-2}{2}$, at the left hand side of inequality \eqref{eqn:sum},   value $\overline{f}(v)$ appears exactly  $\binom {n-2}{2}+1$ times. Thus,
	
	$$ \left(\binom {n-2}{2}+1\right)\sum_{v\in V}\overline{f}(v)\geqslant \binom n 2\ \ \Leftrightarrow 
	\left(\binom {n-2}{2}+1\right)\overline{f}(V)\geqslant \binom n 2\ \ \Leftrightarrow$$

	$$\overline{f}(V)\geqslant \frac{n(n-1)}{(n-2)(n-3)+2}.$$

	Expression $\frac{n(n-1)}{(n-2)(n-3)+2}$ is greater than $1$ and since the SRDN must be an integer, it holds that $\gamma_{sR}(K_{n,2})\geqslant 2$, which concludes the  proof of Step 1. \\
	\textbf{\underline{Step 2}}:\ 
	{ $ \gamma_{sR}(K_{n,2}) \leqslant
		3, \  n \mathrm{\ is \ odd}$ and $ \gamma_{sR}(K_{n,2}) \leqslant
		5,  n \mathrm{\ is \ even}$.}
	
	{\bf Case 1:} $n \mathrm{\ is \ odd} $.\\
	Let us partition the set $\{1,2,\hdots,n\}$ on sets $A_n$ and $B_n$ and the set $V$ on sets $A_{n,2}, B_{n,2}$ and $C_{n,2}$, as shown in Tab. \ref{tab14}.
	We introduce the function $f=(V_{-1},V_1,V_2)$, where sets $V_{-1},V_1$ and $V_2$ are given in the last three rows of Tab. \ref{tab14}. We show that  $f(V) = 3$ and $f$ is SRDF. 
	
	\begin{table}[h]
		\centering
		\begin{tabular}{|l |l|} 
			\hline 
			$A_n$ & $\{1,2,\hdots,\frac{n-3}{2}\}$ \\
			\hline
			$B_n$ &$ \{\frac{n-1}{2}, \frac{n+1}{2},\hdots,n\}$\\
			\hline
			$ A_{n,2}$ &  $\{\{a,b\}|a,b\in A_n\}$\\
			\hline
			$ B_{n,2}$ & $\{\{a,b\}|a,b\in B_n\}$\\
			\hline
			$ C_{n,2}$ & $ \{\{a,b\}|a\in A_n,b\in B_n\}$\\
			\hline
			$ V_2$ & $\{\{1,2\},\{2,3\},\{3,4\},\hdots,\{\frac{n-5}{2},\frac{n-3}{2}\},\{1,\frac{n-3}{2}\}\}$ \\
			\hline
			$ V_{-1}$ & $C_{n,2}$\\
			\hline
			$ V_1$ & $V\setminus (V_2\cup V_{-1})$\\
			\hline
			
		\end{tabular}
		\caption{\label{tab14} The construction of SDRF $f$ for which $f(V) = 3$ }
	\end{table}

	Notice that $V_2\subset A_{n,2}$ and  $V_1=(A_{n,2}\setminus V_2)\cup B_{n,2}$.

	It holds $|A_n| = \frac{n-3}{2},\ |B_n| = \frac{n+3}{2},\ |A_{n,2}| = \binom{(n-3)/2}{2}$, $|B_{n,2}| = \binom{(n+3)/2}2$ and $|C_{n,2}| = \frac{n-3}{2}\cdot\frac{n+3}{2} = \frac{n^2-9}{4}$. 
	
	We have $|V_2| =\frac{n-3}{2},\ |V_1| =\binom{(n-3)/2}{2}-\frac{n-3}{2}+\binom{(n+3)/2}{2}  = \frac{n^2-4n+15}{4} $ and $|V_{-1}| = \frac{n^2-9}{4} $, so $f(V) =2|V_2|+|V_1|-|V_{-1}| = 3. $ 
	
	Let us now prove that $f$ is an SRDF.

	Let $v = \{a,b\}\in V_{-1}$ be an arbitrary vertex. W.l.o.g. suppose that $a\in A_n$. From the definition of  sets $V_{-1}$ and $V_{2}$, it follows that $a$ occurs in exactly two vertices of  set $V_2$, so $v$ has  exactly
	$|V_2|-2=\frac{n-7}{2}>0$ neighbors labeled by $2$. The conclusion is that  condition \eqref{eq:c1srd} is satisfied.
	
	Let us now prove that  condition \eqref{eq:v} is satisfied.  
	
	\begin{enumerate}
		\item[(\it{i})] First let $v = \{a,b\}$ be an arbitrary vertex from  set $V_2$. 
		
		Notice that $a,b\in A_n$. From the definition of $V_2$, it follows that  $a$ and $b$ occur  in exactly $3$ vertices in  set $V_2 $, including vertex $v$. 
		
		Let $\{a,e\}$ and $\{b,f\}$ be the other two vertices from $V_2$ which contain $a$ and $b$, respectively. So
		$\alpha_v = |V_2|-|\{v,\{a,e\},\{b,f\}\}|=\frac{n-3}{2}-3 =\frac{n-9}{2}$. 
		
		To calculate $\beta_v$, we now observe those vertices from  set $V_1$ which are not adjacent to $v$, i.e., those which contain $a$ or $b$. These vertices are from  set $A_{n,2}\setminus V_2$ of the form $\{a,c\}$,  where $c\in A_n\setminus\{a,b,e\}$, or of the form $\{b,d\}$, where $d\in A_n\setminus\{a,b,f\}$. The total number of such vertices is $2\cdot\left(\frac{n-3}{2}-3\right)$.
		Therefore, $\beta_v = |V_1| - 2\cdot\left(\frac{n-3}{2}-3\right) =\frac{n^2-8n+51}{4} $.

		As  vertices from  set $V_{-1}$ which are not adjacent to $v$ are those of  form $\{a,c\}$ and $\{b,c\}$ for each $c\in B_n$,  vertex $v$ has $\gamma_v =|V_{-1}|-2\cdot\frac{n+3}{2} = \frac{n^2-4n-21}{4}$ neighbors in this set. 
		
		Finally, for  $v\in V_2$, it holds $2\alpha_v+\beta_v-\gamma _v +f(v)=  11$, the conclusion being that  condition \eqref{eq:v} is satisfied for vertices from  set $V_2$.
		
		\item[(\it{ii})] For $v = \{a,b\}\in V_1$, we have two possibilities: $v\in A_{n,2}\setminus V_2$ or $v\in B_{n,2}$.
		\begin{itemize}
			\item $v\in A_{n,2}\setminus V_2$.

			Here $a,b\in A_n$ and these elements are contained in exactly $4$ vertices, namely $\{a,e\},\{a,f\},\{b,g\}$ and $\{b,h\}$, which are all labeled with $2$.
			
			 This implies $\alpha_v = |V_2\setminus \{\{a,e\},\{a,f\},\{b,g\},\{b,h\}\}| = |V_2|- 4=\frac{n-11}{2}.$
			
			To calculate $\beta_v$ for this case, we again observe the vertices from  set $V_1$ which are not adjacent to $v$. Such vertices form  set 
			\begin{equation*}
               \{v\}\cup\{\{a,c\}|c\in  A_n\setminus\{a,b,e,f\}\}\cup\{\{b,c\}|c\in  A_n\setminus\{a,b,g,h\} \}. 
			\end{equation*}

			The cardinality of this set is equal to $1+2\cdot\left(\frac{n-3}{2}-4\right)=n-10$. Therefore, we get $\beta_v = |V_1|-(n-10)=\frac{n^2-8n+55}{4}$. The set of neighbors in  set $V_{-1}$ which are not adjacent to $v$ are of the form $\{a,c\}$ and $\{b,c\}$ for each $c\in B_n$. So, $\gamma_v = |V_{-1}|-2\cdot\frac{n+3}{2}=\frac{n^2-4n-21}{4}$.
			So, for this case, we conclude $2\alpha_v+\beta_v-\gamma _v +f(v)=9$.
			
			\item  $v\in B_{n,2}$.
			
			Here  $a,b\in B_n$, so neither $a$ or $b$ are  contained in any vertex from $V_2$, which gives $\alpha_v = |V_2|$. Neighbors of $v$ from $V_1$ form  set $(A_{n,2}\setminus V_2)\cup\{\{c,d\}|c,d\in B_n\setminus \{a,b\}\}$, with its cardinality equal to 
			\begin{equation*}
				\beta_v=\binom{(n-3)/2}{2}-|V_2|+\binom{(n+3)/2-2}{2}=\frac{n^2-8n+15}{4}.
			\end{equation*}
			
			Now it is left to calculate $\gamma_v$.
			All vertices labeled with $-1$, which are not adjacent to $v$  form  set $\{\{a,c\}|c\in A_n\}\cup \{\{b,c\}|c\in A_n\}$. This gives $\gamma_{v} = |V_{-1}|-2\cdot\frac{n-3}{2} = \frac{n^2-4n+3}{4}$. 
			
			Thus for this case we get  $2\alpha_v+\beta_v-\gamma _v +f(v)=1$. So, condition \eqref{eq:v} is satisfied for all vertices labeled by $1$.
		\end{itemize}
		
		\item[(\it{iii})] Let $v = \{a,b\}, a\in A_n, b\in B_n$ be an arbitrary vertex from  set $V_{-1}$. Let $\{a,e\}$ and $\{a,f\}$ be two vertices from $V_2$, which are not adjacent to $v$.  All other vertices from $V_2$ are  adjacent to $v$, so $\alpha_v = |V_2|-2 = \frac{n-7}{2}$. 
		Vertices from  $V_{1}$ which are not adjacent to $v$  form  set $\{\{a,c\}| c\in A_n\setminus \{a,e,f\}\}\cup \{\{b,c\}| c\in B_n\setminus\{b\}\}$. This gives $\beta_v =|V_1|-\left(\frac{n-3}{2}-3\right)-\left(\frac{n+3}{2}-1\right)=\frac{n^2-8n+31}{4}$. 
		To calculate $\gamma_v$, we consider the vertices from  set $V_{-1}$ which are not adjacent to $v$. These vertices form  set $\{v\}\cup\{\{a,c\}|c\in B_n\setminus\{b\}\}\cup\{\{b,c\}|c\in A_n\setminus\{a\}\}$. So, $\gamma_v =|V_{-1}|-\left(1+\left(\frac{n+3}{2}-1\right)+\left(\frac{n-3}{2}-1\right)\right)=\frac{n^2-4n-5}{4} $. Thus, if $v\in V_{-1}$, it holds 
		$2\alpha_v+\beta_v-\gamma _v +f(v)=1$, i.e.,  condition \eqref{eq:v} is satisfied if $v\in V_{-1}$.
	\end{enumerate}
	As we analyzed every possible case for $v\in V$, we conclude that $f$ satisfies condition \eqref{eq:v}.  Therefore, the proof of the theorem  for Case 1: $n \mathrm{\ is \ odd}$ is finished.
	
	{\bf Case 2:}  $n \mathrm{\ is \ even}$. \\
	We define SRDF $f$ for which $f(V)=5$.  We consider two subcases: $n \equiv 0\ (\mathrm{mod} \ 4) $ and $n \equiv 2\ (\mathrm{mod} \ 4) $.
	
	{\em Subcase 2.1: }\label{srd:subcase2.1} $n \equiv 0\ (\mathrm{mod} \ 4)$.
	
	\begin{table}[h]
		\centering
		\begin{tabular}{|l |l|} 
			\hline 
			$A_n$ & $\{1,2,\hdots,\frac{n-2}{2}\}$ \\
			\hline
			$B_n$ &$ \{\frac{n}{2}, \frac{n+2}{2},\hdots,n\}$\\
			\hline
			$ A_{n,2}$ &  $\{\{a,b\}|a,b\in A_n\}$\\
			\hline
			$ B_{n,2}$ & $\{\{a,b\}|a,b\in B_n\}$\\
			\hline
			$ C_{n,2}$ & $ \{\{a,b\}|a\in A_n,b\in B_n\}$\\
			\hline
			$ V_2$ & \makecell{$\{\{1,2\},\{3,4\},\hdots,\{\frac{n-6}{2},\frac{n-4}{2}\}\}\cup\{\{\frac{n}{2},\frac{n+2}{2}\},\{\frac{n+4}{2},\frac{n+6}{2}\},\hdots,$\\ $\{n-2,n-1\}\}\cup\{\{1,3\},\{2,4\}\},$}\\
			\hline
			$ V_{-1}$ & $ C_{n,2}\setminus\{\frac{n-2}{2},n\}$\\
			\hline
			$ V_1$ & $V\setminus (V_2\cup V_{-1})$\\
			\hline
			
		\end{tabular}
		\caption{\label{tab15} The construction of SDRF $f$ for which $f(V) = 5$ }
	\end{table}
	
	Similarly, as in Case 1 we introduce sets $A_n,B_n,A_{n,2},B_{n,2}$ and $C_{n,2}$, as well as function $f=(V_{-1},V_1,V_2)$, given in Tab. \ref{tab15}. 
	Notice that  
	$|A_n| = \frac{n-2}{2},\ |B_n| = \frac{n+2}{2},\ |A_{n,2}| = \binom{(n-2)/2}{2}$, $|B_{n,2}| = \binom{(n+2)/2}2,|C_{n,2}| = \frac{n-2}{2}\cdot\frac{n+2}{2} = \frac{n^2-4}{4}$.
	Also, $V_{1} =  (A_{n,2}\setminus V_2) \cup (B_{n,2}\setminus V_2)\cup \{\frac{n-2}{2},n\}$.
	
	We have $|V_2| =\frac{n-4}{4}+\frac{n}{4}+2 = \frac{n+2}{2},\ |V_{-1}|  = \frac{n^2-4}{4}-1 = \frac{n^2-8}{4} $ and  $|V_1|=\binom {n}{2} -\left(\frac{n+2}{2}+\frac{n^2-8}{4}\right) = \frac{n^2-4n+4}{4} $, so $f(V) =2|V_2|+|V_1|-|V_{-1}| = 5. $ 
	
	Let us now check whether the condition \eqref{eq:c1srd} is satisfied. 
	
	Let $v = \{a,b\}\in V_{-1}$ be an arbitrary vertex and w.l.o.g. suppose that $a\in A_n,\, b\in B_n$.
	Since $n\geqslant 12$,  $\{1,2\},\{3,4\}\in V_2$. 
	\begin{itemize}
		\item If $a\geqslant 3$, then  $\{1,2\}\cap v=\emptyset$, i.e.  $\{1,2\}$ and $v$ are adjacent. Therefore, $v$ has a neighbor in  set $V_2$, which implies that the  condition \eqref{eq:c1srd} is satisfied. 
		
		\item If $a\leqslant 2$, then  $\{3,4\}\cap v=\emptyset$. Similarly, we conclude the condition \eqref{eq:c1srd} is satisfied.
	\end{itemize}
	Let us prove that condition \eqref{eq:v} is satisfied.
	
	For $n = 12$,   the values  $2\alpha_v+\beta_v-\gamma_v+f(v)$  are calculated for all vertices and the results are shown in Tab. \ref{tabb1}. From the last column of Tab. \ref{tabb1} one can see that  condition   \eqref{eq:v} holds.

	\begin{table}[h]
		\centering
		\small
		\begin{tabular}{|l |c|c|c|c|c|c|} 
			\hline
			$v$ & $\alpha_v$& $\beta_v$ & $\gamma_v$ & $f(v)$ & $2\alpha_v+\beta_v-\gamma_v+f(v)$  \\ 
			\hline
			$v\in V_2\cap A_{12,2}$&4&21&20&2&11\\
			\hline
			$v\in V_2\cap B_{12,2}$&6&15&24&2&5\\
			\hline
			$\{1,4\},\{2,3\}$&3&22&20&1&9\\
			\hline
			$\{1,5\},\{2,5\},\{3,5\},\{4,5\}$&5&19&21&1&9\\
			\hline
			$\{5,12\}$&7&14&24&1&5\\
			\hline
			$\{a,b\}\in V_1\cap B_{12,2}, b\neq 12$&5&19&21&1&9\\
			\hline
			$\{a,12\}\in V_1\cap B_{12,2}, a\neq 5$&6&14&25&1&2\\
			\hline
			$\{a,b\}\in V_{-1}\cap C_{12,2},a\neq 5, b\neq 12$&4&18&23&-1&2\\
			\hline
			$\{a,12\}\in V_{-1}\cap C_{12,2}$&5&16&24&-1&1\\
			\hline
			$\{5,b\}\in V_{-1}\cap C_{12,2}$&6&15&24&-1&2\\
			\hline
		\end{tabular}
		\caption{\label{tabb1} The values  $2\alpha_v+\beta_v-\gamma_v+f(v)$   for all vertices of graph $K_{12,2}$}
	\end{table}

	Let now $n\geqslant 16$. 
	
	The proof that condition \eqref{eq:v} is satisfied is similar to the corresponding proof in Case 1. It should be noted that in this case there are more subcases depending on the definitions of sets $V_2,V_1$ and $V_{-1}$. For that reason we shortened the proof, still covering all possible cases.
	
	\begin{enumerate}
		\item[(\it{i})] Let $v = \{a,b\}\in V_2$. We consider two possibilities.
		\begin{itemize}
			\item $v\in A_{n,2}$.  
			
			The lowest value for  $\alpha_v$ is obtained  for 
			$v\in \{\{1,2\},\{3,4\},\{1,3\},\{2,4\}\}$ and it is equal to $|V_2|-3 = \frac{n-4}{2}$. This also shows that  the minimum  value of $\beta_v$ is obtained for $v\notin \{\{1,2\},\{3,4\},\{1,3\},\{2,4\}\}$ and it is equal to $|V_1|- (n-6) = \frac{n^2-8n+28}4$.  For each $v\in A_{n,2}$, we have $\gamma_v = \frac{n^2-4n-16}{4}$. 
			
			Now $2\alpha_v + \beta_v - \gamma_v + f(v) \geqslant 2\cdot \frac{n-4}{2} + \frac{n^2-8n+28}4 - \frac{n^2-4n-16}{4} +2 = 9.$

			\item $v\in B_{n,2}$.
			
			For each $v\in B_{n,2}$, it holds that $\alpha_v = \frac{n}{2}$,  $\beta_v = \frac{n^2-8n+12}{4}$ and $\gamma_v = \frac{n^2-4n}{4}$. 
			
			Therefore, $2\alpha_v + \beta_v - \gamma_v + f(v) = 5.$
		\end{itemize}

		We hereby showed that for each $v\in V_2$, the inequality from condition \eqref{eq:v} is satisfied.
		
		\item[(\it{ii})]   For $v = \{a,b\}\in V_1$ we observe three possibilities.

		\begin{itemize}
			\item $v\in  A_{n,2}\setminus V_2$.
			
			The lowest value for  $\alpha_v$ is obtained  for  $v\in\{\{1,4\},\{2,3\}\}$ and it is equal to $|V_2|-4 = \frac{n-6}{2}$.
			
			Further,  the lowest value for $\beta_v$ is obtained when either  $a$ or $b$ belong to  set $A_n\setminus\{1,2,3,4,\frac{n-2}2\}$ and the other one is equal to $\frac{n-2}2$. 
			
			Here we get $\beta_v=|V_1|- \left(\frac{n-2}2-2+\frac{n-2}2-2\right)-1 = \frac{n^2-8n+24}4$.
			
			The greatest value for $\gamma_v$ is obtained when one of the numbers $a$ or $b$ is equal to $\frac{n-2}2$ and   $\gamma_v = |V_{-1}|-\left(2\cdot\frac{n+2}2-1\right)= \frac{n^2-4n-12}{4}$. 
			
			Thus, $2\alpha_v + \beta_v - \gamma_v + f(v) \geqslant 2\cdot \frac{n-6}{2} + \frac{n^2-8n+24}4 - \frac{n^2-4n-12}{4} +1 = 4.$

			\item $v\in  B_{n,2}\setminus V_2$. 
			
			For $a, b\in B_n\setminus\{n\}$, we get: $\alpha_v=|V_2|-2 = \frac{n-2}{2}$, $\beta_v = |V_1|- \left(\frac{n+2}2-2+\frac{n+2}{2}-3\right) = \frac{n^2-8n+16}4$ and
			$\gamma_v =|V_{-1}|-2\cdot\frac{n-2}2= \frac{n^2-4n}{4}$, so $2\alpha_v + \beta_v - \gamma_v + f(v)  = 3.$
			
			If one of $a$ or $b$ equals $n$ then: $\alpha_v=|V_2|-1 = \frac{n}{2}$, $\beta_v = |V_1|- \left(\frac{n+2}2-2+\frac{n+2}{2}-2\right) -1= \frac{n^2-8n+8}4$ and
			$\gamma_v =|V_{-1}|-\left(2\cdot\frac{n-2}2-1\right)= \frac{n^2-4n+4}{4}$, so $2\alpha_v + \beta_v - \gamma_v + f(v)  = 2.$

			\item $v =\{\frac{n-2}2,n\}$.  
			
			For this vertex we get: $\alpha_v= |V_2| = \frac{n+2}{2}$,  $\beta_v=|V_1|-\left(\frac{n-2}{2}-1+\frac{n+2}{2}-1\right) -1 = \frac{n^2-8n+8}{4}$,
			$\gamma_v = |V_{-1}|- \left(\frac{n+2}{2}-1+\frac{n-2}{2}-1\right) = \frac{n^2-4n}{4}$, which gives
			$2\alpha_v + \beta_v - \gamma_v + f(v)= 5.$
		\end{itemize}
		
		We hereby proved  that the inequality from condition \eqref{eq:v} is satisfied for $v\in V_1$.

		\item[(\it{iii})] For  $v = \{a,b\}\in V_{-1}$, we  also consider three cases.
		\begin{itemize}
			\item $a\in A_n\setminus\{\frac{n-2}2\} $ and $b\in B_n\setminus \{n\}$.  
			
			The smallest value for $\alpha_v$ is obtained for $a\in\{1,2,3,4\}$,  where $a$ and $b$ occur in exactly three vertices in  set $V_2$, so   $\alpha_v= |V_2|-3 = \frac{n-4}{2}$. 
			
			The smallest value of $\beta_v$ is achieved for $a\notin\{1,2,3,4\}$ and it is equal to $\beta_v=|V_1|-\left(\frac{n-2}{2}-2+\frac{n+2}{2}-2\right)  = \frac{n^2-8n+20}4$. 
			
			For each vertex $v$, in this case we get  $\gamma_v = |V_{-1}|- \left(\frac{n+2}{2}+\frac{n-2}{2}-1\right) = \frac{n^2-4n-4}{4}$. 
			
			Therefore, $2\alpha_v + \beta_v - \gamma_v + f(v) \geqslant 2\cdot \frac{n-4}{2} + \frac{n^2-8n+20}4 - \frac{n^2-4n-4}{4} -1 = 1.$

			\item $a = \frac{n-2}2$ and $b\in B_n\setminus\{n\}$.   
			
			In this case $v$ is not adjacent to only one vertex from $V_2$ which contains $b$, so   $\alpha_v= |V_2|-1 = \frac{n}{2}$. 
			
			Further, we get $\beta_v=|V_1|-\left(\frac{n-2}{2}-1+\frac{n+2}{2}-2\right)-1  = \frac{n^2-8n+12}4$ 
			
			and  $\gamma_v = |V_{-1}|- \left(\frac{n+2}{2}-1-\frac{n-2}{2}-1\right) = \frac{n^2-4n}{4}$.
			
			Therefore,	$2\alpha_v + \beta_v - \gamma_v + f(v) = 2.$
			
			\item $a\in A_n\setminus\{\frac{n-2}2\}$ and $b = n$.

			If  $a\in \{1,2,3,4\}$, we get:  $\alpha_v=\frac{n-2}{2}$, $\beta_v = \frac{n^2-8n+16}{4} $, $\gamma_v = \frac{n^2-4n}{4}$ and $2\alpha_v + \beta_v - \gamma_v + f(v)=1$.
			
			If  $a\notin \{1,2,3,4\}$, then holds:  $\alpha_v=\frac{n}{2}$, $\beta_v = \frac{n^2-8n+12}{4} $, $\gamma_v = \frac{n^2-4n}{4}$ and $2\alpha_v + \beta_v - \gamma_v + f(v)=2$.
		\end{itemize}
		
		This proves that the inequality from condition  \eqref{eq:v} is satisfied for $v\in V_{-1}$. 
	\end{enumerate}
	Since we covered all possible cases for  $n \equiv 0\ (\mathrm{mod} \ 4)$, the constructed function $f$ is an SDRF and this part of the theorem is proved.
	
	{\em Subcase 2.2:} $n \equiv 2\ (\mathrm{mod} \ 4)$.\\ 
	Let  sets $A_n,B_n,A_{n,2},B_{n,2}$ and $C_{n,2}$ be constructed as  in Subcase 2.1.
	The definition of function $f=(V_{-1},V_1,V_{2})$  such that $f(V)=5$ is given in Tab \ref{tab16}.
	
	\begin{table}[h]
		\centering
		\begin{tabular}{|l |l|} 
			\hline 
			$ V_2$ & 	\makecell[c]{$ \{\{1,2\},\{3,4\},\hdots,\{\frac{n-4}{2},\frac{n-2}{2}\}\}\cup\{\{\frac{n}{2},\frac{n+2}{2}\},\{\frac{n+4}{2},\frac{n+6}{2}\},\hdots,$\\ $\{n-1,n\}\}\cup\{\{1,3\},\{2,4\},\{\frac n 2,\frac{n+4}2\}\},$ }\\
			\hline
			$ V_{-1}$ & $C_{n,2}$\\
			\hline
			$ V_1$ & $V\setminus (V_2\cup V_{-1})$\\
			\hline
			
		\end{tabular}
		\caption{\label{tab16} The construction of SDRF $f$ for which $f(V) = 5$ }
	\end{table}

	Notice that $V_{1}=  (A_{n,2}\setminus V_2) \cup (B_{n,2}\setminus  V_2) $.
	
	The cardinalities of these sets are equal to:  $|V_2| =\frac{n-2}{4}+\frac{n+2}{4}+3 = \frac{n+6}{2},\ |V_{-1}|  = \frac{n-2}{2}\cdot\frac{n+2}{2} = \frac{n^2-4}{4} $ and  $|V_1|=\binom {n}{2} -\left(\frac{n+6}{2}+\frac{n^2-4}{4}\right) = \frac{n^2-4n-8}{4} $. Thus, $f(V) =2|V_2|+|V_1|-|V_{-1}| = 5. $

	Let us prove that  condition  \eqref{eq:c1srd} is satisfied.
	
	Since  $n\geqslant 14$, we have that $\{1,2\},\{3,4\}\in V_2$. Therefore,  similarly as in Subcase 2.1. it can be shown that    condition  \eqref{eq:c1srd} holds.

	Let us now prove that  condition  \eqref{eq:v} is satisfied. 
	
	For $n=14$ the  values $2\alpha_v + \beta_v - \gamma_v + f(v), v\in V$ are given in  Tab. \ref{tabb2}.  One can see that condition \eqref{eq:v} holds in this case.
	\begin{table}[h]
		\centering
		\begin{tabular}{|l |c|c|c|c|c|} 
			\hline
			$v$ & $\alpha_v$& $\beta_v$ & $\gamma_v$ & $f(v)$ & $2\alpha_v+\beta_v-\gamma_v+f(v)$  \\ 
			\hline
			$\{1,2\},\{1,3\},\{2,4\},\{3,4\}$&7&27&32&2&11\\
			\hline
			$\{5,6\}$&7&27&32&2&11\\
			\hline
			$\{7,8\},\{9,10\}$&8&22&36&2&4\\
			\hline
			$\{11,12\},\{13,14\}$&9&21&36&2&5\\ 
			\hline
			$\{7,9\}$&7&23&36&2&3\\ 
			\hline
			$\{1,4\},\{2,3\}$&6&28&32&1&9\\  
			\hline
			$\{a,b\}, a\in\{1,2,3,4\},b\in\{5,6\}$&7&27&32&1&10\\ 
			\hline
			$\{a,b\},a,b\in B_{14}\setminus \{7,9\}$&8&22&36&1&3\\ 
			\hline
			$\{a,b\}\in V_1\cap B_{14,2}, a\in\{7,9\}$&7&23&36&1&2\\ 
			\hline
			$\{a,b\}, a\in\{1,2,3,4\},b\in\{7,9\},$ &6&25&35&-1&1\\   
			\hline
			$\{a,b\}, a\in\{1,2,3,4\},b\in B_{14}\setminus\{7,9\},$   &7&24&35&-1&2\\  
			\hline
			$\{a,b\},a\in\{5,6\},b\in B_{14}\setminus\{7,9\},$ &8&23&35&-1&3\\   
			\hline
			$\{5,7\},\{5,9\},\{6,7\},\{6,9\}$  &7&24&35&-1&2\\  
			\hline

		\end{tabular}
		\caption{\label{tabb2} The values  $2\alpha_v+\beta_v-\gamma_v+f(v)$   for all vertices of graph $K_{14,2}$}
	\end{table}
	
	Let now $n\geqslant 18$.
	\begin{enumerate}
		\item[(\it{i})] Let $v = \{a,b\}\in V_2$. Similar to the previous subcase, we differ two cases.
		\begin{itemize}
			\item $v\in A_{n,2}$.  
			
			The minimum  value of $\alpha_v$ is obtained  for 
			
			$v\in \{\{1,2\},\{3,4\},\{1,3\},\{2,4\}\}$, where $\alpha_v = |V_2|-3 = \frac{n}{2}$. 
			
			This implies that  the minimum  value of $\beta_v$ is obtained for
			\\ $v\notin \{\{1,2\},\{3,4\},\{1,3\},\{2,4\}\}$ and  equals $\beta_v=|V_1|- 2\cdot\left(\frac{n-2}2-2\right) = \frac{n^2-8n+16}4$. 
			
			Further, we get $\gamma_v = |V_{-1}|-2\cdot\frac{n+2}2=\frac{n^2-4n-12}{4}$. 
			
			This gives $2\alpha_v + \beta_v - \gamma_v + f(v) \geqslant 2\cdot \frac{n}{2} + \frac{n^2-8n+16}4 - \frac{n^2-4n-12}{4} +2 = 9.$
			
			\item $v\in B_{n,2}$.
			
			In this case $\alpha_v$ is minimal   for vertex  
			$v\in \{\{\frac n 2,\frac{n+4}2\}\}$ for which  $\alpha_v = |V_2|-3 = \frac{n}{2}$. 
			
			The value $\beta_v$ is minimal for $a,b\notin \{\frac n 2,\frac{n+4}2\}$ for which $\beta_v=|V_1|- 2\cdot\left(\frac{n+2}2-2\right) = \frac{n^2-8n}4$. 
			
			Further, we get $\gamma_v = |V_{-1}|-2\cdot\frac{n-2}2=\frac{n^2-4n+4}{4}$. 
			
			This gives $2\alpha_v + \beta_v - \gamma_v + f(v) \geqslant 2\cdot \frac{n}{2} + \frac{n^2-8n}4 - \frac{n^2-4n+4}{4} +2 = 1.$
			
		\end{itemize}

		Therefore the inequality from condition \eqref{eq:v} is fulfilled for every $v\in V_2$.

		\item[(\it{ii})]  For $v\in V_1$ we consider two cases.

		\begin{itemize}
			\item $v\in  A_{n,2}\setminus V_2$. 
			
			Value $\alpha_v$ is the smallest for  $v\in\{\{1,4\},\{2,3\}\}$ and equals  $|V_2|-4 = \frac{n-2}{2}$.
			
			The minimum value of $\beta_v$ is obtained when  $a,b \in A_n\setminus\{1,2,3,4\}$ and it equals $\beta_v=|V_1|- \left(\frac{n-2}2-2+\frac{n-2}2-3\right) = \frac{n^2-8n+20}4$.
			
			We also get that $\gamma_v = |V_{-1}|-2\cdot\frac{n+2}2= \frac{n^2-4n-12}{4}$. 
			
			So in this case $2\alpha_v + \beta_v - \gamma_v + f(v) \geqslant 2\cdot \frac{n-2}{2} + \frac{n^2-8n+20}4 - \frac{n^2-4n-12}{4} +1 = 7.$

			\item $v\in  B_{n,2}\setminus V_2$. 
			
			Value  $\alpha_v$ is minimal when $a$ or $b$ belong to  set $\{\frac n 2,\frac{n+4}2\}$, where  $\alpha_v=|V_2|-3 = \frac{n}{2}$. 
			
			The lowest value of $\beta_v$ is obtained for $a,b\notin \{\frac n 2,\frac{n+4}2\}$, when  $\beta_v = |V_1|- \left(\frac{n+2}2-2+\frac{n+2}2-3\right) = \frac{n^2-8n+4}4$.
			
			Here it holds that $\gamma_v = |V_{-1}|-2\cdot\frac{n-2}2= \frac{n^2-4n+4}{4}$. 
			
			Therefore, $2\alpha_v + \beta_v - \gamma_v + f(v) \geqslant 2\cdot \frac{n}{2} + \frac{n^2-8n+4}4 - \frac{n^2-4n+4}{4} +1 = 1.$
			
		\end{itemize}
		The conclusion is  the inequality from condition \eqref{eq:v} holds for each $v\in V_1$.
		
		\item[(\it{iii})] For an arbitrary vertex $v = \{a,b\}\in V_{-1}$, we get the following results:
		
		\begin{itemize}
			
			\item If $a\in \{1,2,3,4\}$ and $b\in\{\frac n 2,\frac{n+4}2\}$ :  $\alpha_v=\frac{n-2}{2}$,\\ $\beta_v=|V_1|-\left(\frac{n-2}{2}-3+\frac{n+2}{2}-3\right)  = \frac{n^2-8n+16}4$, 
			$\gamma_v = \frac{n^2-4n}{4}$, 
			$2\alpha_v + \beta_v - \gamma_v + f(v)=1$.
			\item If $a\in \{1,2,3,4\}$ and $b\notin\{\frac n 2,\frac{n+4}2\}$ :  $\alpha_v=\frac{n}{2}$, \\ $\beta_v=|V_1|-\left(\frac{n-2}{2}-3+\frac{n+2}{2}-2\right)  = \frac{n^2-8n+12}4$,
			$\gamma_v = \frac{n^2-4n}{4}$, $2\alpha_v + \beta_v - \gamma_v + f(v)=2$.
			\item If $a\notin \{1,2,3,4\}$ and $b\in\{\frac n 2,\frac{n+4}2\}$ :   $\alpha_v=\frac{n}{2}$,  \\ $\beta_v=|V_1|-\left(\frac{n-2}{2}-2+\frac{n+2}{2}-3\right)  = \frac{n^2-8n+12}4$,
			$\gamma_v = \frac{n^2-4n}{4}$, $2\alpha_v + \beta_v - \gamma_v + f(v)=2$.
			\item If $a\notin \{1,2,3,4\}$ and $b\notin\{\frac n 2,\frac{n+4}2\}$ :  $\alpha_v=\frac{n+2}{2}$, \\ $\beta_v=|V_1|-\left(\frac{n-2}{2}-2+\frac{n+2}{2}-2\right)  = \frac{n^2-8n+8}4$,
			$\gamma_v = \frac{n^2-4n}{4}$, $2\alpha_v + \beta_v - \gamma_v + f(v)=3$.
			
		\end{itemize}
		It  follows  that the inequality from condition \eqref{eq:v} holds for each $v\in V_{-1}$. Therefore, the function $f$ introduced in this subcase is also an SRDF, which finally proves the theorem.
		
	\end{enumerate}
\end{proof}

We used the ILP model  from \cite{filipovic2022solving}  to find SRDN for some special cases of Kneser graphs which are provided in Remark \ref{srkk2r}.  The SRDFs which correspond to these solutions and ILP model details are presented  in Appendix \ref{app:A}.

\begin{rem} \label{srkk2r} It holds 
	$$\gamma_{sR}(K_{n,2}) = \begin{cases}
		5, & n=5,6,7,8, \\
		4, & n=10, \\
		3, & n=9,11.
	\end{cases}$$
\end{rem}
It can be observed that 	$\gamma_{sR}(K_{9,2}) =\gamma_{sR}(K_{11,2}) = 3$, which is in line with the proposed  upper bound proposed in Theorem~\ref{thm:signed} for odd $n$. Also, $\gamma_{sR}(K_{8,2})=5$, which is equal to the upper bound for graphs with greater even dimensions, considered in Theorem~\ref{thm:signed}.

\section{Conclusions}

This article  considered  the (total) Roman domination problem  for Kneser graphs $K_{n,k}$, $n\geqslant k(k+1)$ and the signed Roman domination problem for $K_{n,2}$. We proved that  $\gamma_{tR}(K_{n,k}) = \gamma_{R}(K_{n,k})=2(k+1)$, if $n\geqslant k(k+1)$. For all $n\geqslant 12$   the lower and upper bounds for SRDN were given for even $n$,   $ 2\leqslant \gamma_{sR}(K_{n,2}) \leqslant 5$, while for odd $n$,   $2\leqslant \gamma_{sR}(K_{n,2}) \leqslant 3$.

Finding a more tighter bounds for SDRNs in cases $k=2,3$, or even the exact values could be a promising direction for future work.
Also, finding the bounds for (T)RDN, when $2k< n<k(k+1)$, as well as the bounds of SRDN for $k\geqslant 3$ remains open.
Investigating the  other graph invariants on Kneser graphs, such as Roman $k-$domination \cite{kammerling2009roman}, double Roman domination  \cite{beeler2016double}, signed double Roman domination \cite{ahangar2019signed}, strong Roman domination \cite{alvarez2017strong}, etc. could be a challenge for further work. 

\section*{Acknowledgments}
This research is  supported by a bilateral project between Austria  and Bosnia and Herzegovina funded by the Ministry of Civil Affairs of Bosnia and Herzegovina  under no. 1259074.

\appendix
\section{Results on small Kneser graphs}\label{app:A}
\subsection{Results on small Kneser graphs for RDP}

The  ILP model from \cite{burger2013binary}  was  implemented in Cplex solver ~\cite{lima2010ibm}   to obtain the RDN of some Kneser graphs of small sizes. It is stated as follows.
The set of variables is defined by:

\begin{equation*}
	x_v=
	\begin{cases}
		1, &f(v) = 1,\\
		0, &otherwise.
	\end{cases}
\end{equation*}

\begin{equation*}
	y_v=
	\begin{cases}
		1, &f(v) = 2,\\
		0, &otherwise.
	\end{cases}
\end{equation*}

The ILP model for RDP  is formulated as:
\begin{align*}
	&\min \sum_{v\in V}(x_v+2y_v)\\
	&\mbox{s.t.}\\
	&x_v+y_v + \sum_{u\in N(v)}y_u\geqslant 1, \forall \ v\in V,\\
	&x_v+y_v\leqslant 1, \ \forall v\in V,\\
	&x_v, y_v \in \{0,1\}, \ \forall v \in V.
\end{align*}

In Tab. \ref{tab1} we  present the obtained ILP solutions for RDFs with minimum weight. The first two columns contain basic parameters for  graph $K(n,k)$. The third column contains value of  RDN obtained by solving the corresponding ILP model. The last three columns contain detailed information about   sets $(V_2,V_0,V_1)$, respectively, which corresponds to the exact solution obtained by the ILP model.

\begin{table}[h]
	\centering
	\small
	\begin{tabular}{|l |l|l|p{7cm}|c|c|} 
		\hline
		$n$ & $k$& $f(V)$ & $V_2$ & $V_0$ & $V_1$  \\ 
		\hline
		\hline
		$4,5$ & $2$ &$6$ & $\{\{1,2\},\{1,3\},\{2,3\}\}$ & $ V\setminus V_2$ & $\emptyset$\\
		\hline
		$6$ & $3$ &$20$ &  $\{\{1,2,3\},\{1,2,4\},\{1,2,5\},\{1,2,6\},\{1,3,4\},$ $\{1,3,5\},\{1,3,6\},\{2,3,4\},\{2,3,5\},\{2,3,6\}\}$ & $ V\setminus V_2$ & $\emptyset$\\
		\hline
		$7,8,9$ & $3$ &$14$ &  $\{\{1,2,5\},\{1,3,6\},\{1,4,7\},\{2,3,4\},\{2,6,7\},$ $\{3,5,7\},\{4,5,6\}\}$ & $ V\setminus V_2$ & $\emptyset$\\ 
		\hline 
		$10$ & $3$ &$12$ & $\{\{1,2,8\},\{1,4,8\},\{2,4,10\},\{3,5,9\},\{3,6,7\}$ $\{5,6,9\}\}$ & $ V\setminus V_2$ & $\emptyset$\\ 
		\hline
		$11$ & $3$ &$10$ & \shortstack {$\{\{1,5,9\},\{1,7,9\},\{2,3,8\},\{4,5,7\},\{6,10,11\}\}$} & $ V\setminus V_2$ & $\emptyset$\\ 
		\hline
	\end{tabular}
	\caption{\label{tab1} The solutions obtained by solving the ILP  on small Kneser graphs}
\end{table}

\subsection{Results on small Kneser graphs for SRDP}

The  ILP model from \cite{filipovic2022solving}  was  implemented in Cplex solver  ~\cite{lima2010ibm}  to obtain relation between TRDN and domination number as well as with RDN  values of SRDN  for small Kneser graphs. It is stated as follows. 

The set of variables is given by: 
\begin{equation*}
	x_v=
	\begin{cases}
		1, &f(v) = 1\\
		0, &otherwise.
	\end{cases}
\end{equation*}
\begin{equation*}
	y_v=
	\begin{cases}
		1, &f(v) = 2\\
		0, &otherwise.
	\end{cases}
\end{equation*}
The ILP model for SRDP is formulated as:

\begin{align*}
	&\min  \sum_{v \in V} {\left(2 x_v + 3 y_v -1\right)}\\
	&\mbox{s.t.}\\
	&x_v  + y_v\leq 1, \ \forall u \in V,\\
	&x_v  + y_v +\sum_{u \in N(v)}{y_u} \geq 1,\ \forall v \in V,\\
	&\sum_{u \in N[v]}{\left(2 x_u + 3 y_u -1\right)} \geq 1,\ \forall v \in V,\\
	&x_v, y_v \in \{0,1\},\  \forall v \in V.
\end{align*}

Tab. \ref{tab2} contains the SRDFs of the minimum weight which are  obtained by solving the aforementioned ILP model for SRDP on small Kneser graphs $K(n,2)$. The table is organized similarly as Tab.~\ref{tab1}, with the exception that column $k$ is omitted since  $k=2$ in all cases. The last three columns carry the information about  sets $V_2,V_1$, and $V_{-1}$, respectively, in the corresponding partition.
\begin{table}[h]
	\centering
	\small
	\begin{tabular}{|l|l|p{3.5cm}|p{4cm}|l|} 
		\hline
		$n$ & $f(V)$ & $V_2$ & $V_1$ & $V_{-1}$  \\ 
		\hline
		\hline
		$4$  & $3$ & $\{\{1,2\},\{1,3\},\{2,3\}\}$ & $ \emptyset$ & $V\setminus V_2$\\
		\hline
		$5$  & $5$ & $\{\{1,3\},\{1,4\},\{3,4\}\}$ & $ \{\{2,4\},\{2,5\},\{4,5\}\}$ & $V\setminus(V_2\cup V_1)$\\
		\hline
		$6$  & $5$ & $\{\{1,2\},  \{1,4\},  \{1,5\},$  $\{2,5\},  \{3,6\},  \{4,5\} \}$ & $ \{\{2,4\}\}$ & $V\setminus(V_2\cup V_1)$\\
		\hline
		$7$  & $5$ & $\{\{2,5\},  \{2,6\},  \{3,4\} $, $\{5,6\} \}$ & $\{\{1,2\},  \{1,5\}, \{1,6\},  \{1,7\},$   $\{2,7\},  \{5,7\},  \{6,7\}  \}$ & $V\setminus(V_2\cup V_1)$\\
		\hline
		$8$  & $5$ & $\{\{1,7\},  \{2,5\},  \{2,8\},$  $\{3,4\}, \{3,6\},  \{4,6\},$ $  \{5,8\}\}$ & $\{1,2\},  \{1,5\}, \{1,8\}, $  $\{2,7\},  \{5,7\},  \{7,8\}  \}$ & $V\setminus(V_2\cup V_1)$\\
		\hline
		$9$  & $3$ & \shortstack {$\{ \{3,4\}, \{3,8\},  \{4,8\}   \}$ }& $ \{\{1,2\},  \{1,5\}, \{1,6\},  \{1,7\},$  $\{1,9\},  \{2,5\},  \{2,6\}, \{2,7\},$ $ \{2,9\},  \{5,6\},  \{5,7\},  \{5,9\},$ $   \{6,7\},  \{6,9\},  \{7,9\}\} $& $V\setminus(V_2\cup V_1)$\\
		\hline
		$10$  & $4$ & $\{\{1,2\},  \{3,5\},  \{4,8\},$ $ \{6,7\},  \{6,10\},  \{7,9\},$ $ \{9,10\}  \}$ & $ \{\{1,3\},  \{1,4\},  \{1,5\},  \{1,8\},$ $ \{2,3\},  \{2,4\},  \{2,5\},  \{2,8\},$ $ \{3,4\},  \{3,8\},  \{4,5\},  \{5,8\},$ $\{6,9\},  \{7,10\}\}$ & $V\setminus(V_2\cup V_1)$\\
		\hline
		
		$11$ & $3$ & $\{ \{3,6\}, \{3,11\},  \{4,6\},$  $\{4,11\}    \}$ & $ \{\{1,2\},  \{1,5\}, \{1,7\},  \{1,8\},$ $ \{1,9\}, \{1,10\},  \{2,5\},  \{2,7\},$ $\{2,8\},  \{2,9\},\{2,10\},\{3,4\},$ $\{5,7\},  \{5,8\},  \{5,9\}, \{5,10\},$ $ \{6,11\},  \{7,8\},  \{7,9\},  \{7,10\}, $ $\{8,9\},  \{8,10\},\{9,10\}\}$ & $V\setminus(V_2\cup V_1)$\\
		\hline
	\end{tabular}
	\caption{\label{tab2} The solutions obtained by solving the ILP on small Kneser graphs}
\end{table}

\pagebreak

\nocite{*}
\bibliographystyle{abbrvnat}
\bibliography{paperKfinal}
\label{sec:biblio}

\end{document}